\newtheorem{thm}{Theorem}[section]
\newtheorem{cor}[thm]{Corollary}
\newtheorem{lem}[thm]{Lemma}
\newtheorem{prop}[thm]{Proposition}
\newtheorem{conj}[thm]{Conjecture}
\theoremstyle{definition}
\newtheorem{rem}[thm]{Remark}
\newtheorem{que}[thm]{Question}
\numberwithin{equation}{section}
\begin{document}
\title[On a  bijection between   a finite group\ldots    ]
{On a  bijection between   a finite group    and   cyclic group}%
\author[    ]{  Mohsen Amiri}%
\address{ Departamento de  Matem\'{a}tica, Universidade Federal do Amazonas.}%
\email{m.amiri77@gmail.com}%
\email{}
\subjclass[2020]{20D99}
\keywords{ Finite group, order elements}%
\thanks{}
\thanks{}

% ----------------------------------------------------------------

\begin{abstract}
 We show that for any finite group $G$ of order $n$ there exists  a bijection $f$ from $G$ onto the cyclic group $C_{n}$ such that $o(x)$ divides $o(f(x))$ for all $x\in G$. This confirms Problem 18.1 in  \cite{Kh}.
\end{abstract}

\maketitle

%=============================================================================================
%                                                                             INTRODUCTION
%=============================================================================================

\section{\bf Introduction}

 For a group $G$,  the set of orders of all
elements in $G$ is denoted by $\omega(G)$ which has been recently called the spectrum of $G$. 
One of the most interesting concepts in Finite Group Theory which has
recently attracted several researchers is the problem of characterizing finite
groups by element orders. 
For example, A finite group $G$ is said to be recognizable by spectrum, i.e., by the set of element orders,
if every finite group $H$ having the same spectrum as $G$ is isomorphic to $G$.
In 1987, in his letter to J. Thompson, W. Shi conjectured that every finite simple group
is recognizable in the class of finite groups by its order and the set of element orders. In his
answer, J. Thompson highly appreciated this conjecture and put forward another one: every
finite nonabelian simple group is recognizable in the class of finite groups with trivial center
by the set of sizes of conjugacy classes. As a result of efforts by many mathematicians started in \cite{97} and \cite{18}, the validity of
 both conjecture  was eventually established.
For any $m\in \omega(G)$ let $s(m)$ be    the size of the subset $\{g\in G: o(g)=m\}$ of $G$.
In \cite{jaf}, H. Amiri, S.M.J. Amiri and  M. Isaacs studied the spectrum of a finite group $G$ from a different point of view: they introduced the function  $\psi(G)=\sum_{g\in G}o(g)=\sum_{m\in \omega(G)}m\cdot s(m)$, and they proved in the set of all finite groups of order $n$, $\psi(C_n)$ is maximal. Starting from this result, many   mathematicians  have recently studied the function $\psi(G)$ and its relations with
the structure of $G$, also, in \cite{21} and \cite{22}, the authors introduced  some solubility criteria for a certain group $G$ with respect its valor  $\psi(G)$.   M. Isaacs in \cite{lad},  ask a question about  the spectrum of a finite group $G$ from a different point of view:

\begin{que}\label{is}(see Problem 18.1 in  \cite{Kh})
{\it Does there necessarily exist a bijection $f$ from $G$ onto a cyclic group of order $n$ such that for each element $x\in G$, the order of $x$ divides the order of $f(x)$}?
\end{que}

An affirmation in the  special case of a solvable group $G$  has already been settled  by F. Ladisch \cite{lad}. 
Note that if $n=8m$, then  we can not  replace the cyclic group $C_n$ with other abelian groups  in Question \ref{is}. For example,  in \cite{mohsen}, authors show that if $n=8m$, then 
there is no any bijection $f$ from
$C_m\times Q_8$ onto  $C_{4m}\times C_2$ such that $o(x)\mid o(f(x))$. Hence, the cyclic group $C_n$  cannot be replaced by other non-cyclic abelian  groups of order $n$.
The main objective of this article is   to give an affirmative answer to  Question \ref{is}.

For any two positive integers $n$  and  $p$, the biggest divisor of $n$ which is co-prime to $p$ is denote by $n_{p'}$ and $n/n_{p'}$ is denoted by $n_p$. Also, the set of all prime divisors of $n$ is denoted by $\pi(n)$. For any element $x$ of a group $G$, the symbol  $[x]$ denote  the set of all  generators of the cyclic subgroup $\langle x\rangle$ of $G$. Let $B$ a non-empty subset of a group $G$ of order $n$, and let $d$ a divisor of $n$. 
We denote by $Sol(B,d,G)$ to be the set of all $x\in G$ such that $x^m\in B^G=\{b^g: b\in B, g\in G\}$ for some divisor $m$ of $d$, and  the set of all elements of $G$ of order $d$ is denoted by $B_d$.  Also, the unique  cyclic subgroup of order $d$ of the cyclic group of order $n$ is denoted by $C_{d}$.  

{\bf Proof Outline:}
Let $G$ be a finite group of order $n$, and let $d$ and $m$ be two co-prime divisors of $n$. The   strategy in our proof  start
with a find an injection from a $Sol(B_m,d,G)$ (with $m>1$ and $gcd(m,d)=1)$ of $G$   into   $C_n$, and then build
injective maps on larger subsets. The crucial tools here  are  Lemma \ref{jai} and Proposition \ref{dis},
which allows to change part of the injection already constructed and
move subsets of $C_n$, so that the changed injection can
be extended to a larger subset of
$C_n$. Next, we use this injection, the  equality
$Sol(1,d,G)=Sol(B_{d_p},d_{p'},G)\cup Sol(1,d/p,G)$ for the smallest prime divisor $p$ of $d$, and induction, to construct an injection from 
$Sol(1,d,G)$ to $C_n$.

In what follows, we adopt the notations  established in the Isaacs' book on finite groups  \cite{I}.

%=============================================================================================
%                                                              A GENERALISATION OF ELEMENT ORDERS
%=============================================================================================

\section{Results}\label{sec2}

 We start with the following  
fundamental theorem proved by Frobenius \cite{fr}, more than hundred years ago, in 1895:

\begin{thm}
If $d$ is a divisor of the order of a finite group $G$, then the number of solutions of
$x^d=1$ in $G$ is a multiple of $d$.
\end{thm}
 
 The part (v) of the following proposition generalized
  Theorem 3, of \cite{Wei}.

\begin{prop}\label{dis}
Let $G$ be a finite group, and let $d$ and $t$ be divisors of $|G|$ such that $gcd(t,d)=1$.
For any $a\in B_t$ let 
$$S_a=\{ba:b\in Sol(1,d,C_G(a))\}$$ and
$D_a:= Sol(1,d,\frac{C_G(a)}{\langle a\rangle })$. 

(i) For any $a_1,a_2\in B_t$, if $a_1\neq a_2$, then $S_{a_1}\cap S_{a_2}=\varnothing$.

(ii) For any $a\in B_t$ the function $f$ from $S_a$ into $D_a$ by $f(ba)=b\langle a\rangle$ is a bijection such that $o(ba)\mid o(f(ba))t$. 

(iii) For any $a_1,a_2\in B_t$, if $a_1\neq a_2$, then for all subgroups $H$ of $G$, we have $Sol([a_1],d,H)\cap Sol([a_2],d,H)=\varnothing$ or $Sol([a_1],d,H)=Sol([a_2],d,H)$.
 
 (iv) For any   $a_1\neq a_2 \in B_t$,  we have  $Sol([a_1],d,C_G(a_1))\cap Sol([a_2],d,C_G(a_2))=\varnothing$ or $Sol([a_1],d,C_G(a_1))=Sol([a_2],d,C_G(a_2))$.

(v) For any $a\in B_t$,   
  $$ lcm(\varphi(t), d ) \mid |Sol([a],d,G)|. $$

\end{prop}
\begin{proof}
(i) Let $b_1a_1=b_2a_2\in S_{a_1}\cap S_{a_2}$ where $a_1\neq a_2\in B_t$. Clearly,   $o(b_1)=o(b_2)$, because  $b_1a_1=a_1b_1$ and $a_2b_2=b_2a_2$. Also, we have
$a_{1}^m=a_2^m$ where $m=o(b_1)$.
Since $gcd(t,m)=1$, we have $\langle a_1\rangle=\langle a_2\rangle$, so 
$a_1=a_2^j$ for some integer $j$. Then 
$b_1=b_2a_{2}^{1-j}$. Since $gcd(t,m)=1$, we have $1-j=0$, showing that 
$a_1=a_2$, which is a contradiction.

(ii) Clearly, $f$ is a surjection map.
If $b\langle a\rangle=y\langle a\rangle\in D_a$, then 
$ba=ya^i$ for some integer $i$. So by a similar argument as the case (i), we have $b=y$, showing that $f$ is an injection.
Clearly, $o(ba)=o(f(ba))o(a)$.

 (iii)  It is easy to show that for any $a\in B_t$, we have $Sol([a],d,H)=\bigcup_{g\in G} \{ba^g:b\in Sol(1,d,C_H(a^g))\}$. Let $a_1,a_2\in B_t$.  Suppose that $b_1a_1=b_2a_2\in   Sol([a_1],d,H)\cap Sol([a_2],d,H)$ where $b_1\in C_G(a_1)$ and $b_2\in C_G(a_2)$ such that $b_1^d=b_2^d=1$.
 Then by a the same argument as the proof of the case (i) $a_1=a_2$.
 Therefore $Sol([a_1],d,H)=Sol([a_2],d,H)$.
 
 (iv) Let $b_1a_1=b_2a_2 \in Sol([a_1],d,C_G(a_1))\cap Sol([a_2],d,C_G(a_2))$ where $b_1\in C_G(a_1)$ and $b_2\in C_G(a_2)$ such that $b_1^d=b_2^d=1$. Then by a the same argument as the proof of the case (i) $a_1=a_2$, and so $C_G(a_1)=C_G(a_2).$ It follows from the case (v) that $Sol([a_1],d,C_G(a_1))= Sol([a_2],d,C_G(a_2))$.

(v) 
Let $a\in B_t$.  
It is clear that $\bigcup_{g\in G}\bigcup_{a^i\in [a]}S_{a^{ig}} \subseteq Sol([a],d,G).$
Let $x\in Sol([a],d,G)$. Then there exist $g\in G$, a divisor $m$ of $d$ and $a^i\in [a]$ such that 
$x^d=(a^i)^g$. Hence $x\in C_G(a^g)$, and so
$x^{t}\in C_G(a^g).$ Therefore $x^{t}(a^{i})^g\in S_{(a^i)^g}$, showing that $x\in \bigcup_{g\in G}\bigcup_{a^i\in [a]}S_{a^{ig}}$. Consequently, $$\bigcup_{g\in G}\bigcup_{a^i\in [a]}S_{a^{ig}} = Sol([a],d,G).$$  
Therefore $ |a^G||S_a|\mid |Sol([a],d,G)|$. 
Let $k=gcd(|C_G(a)|,d)$.
Define $f$ from $S_a$ into $Sol(1,k,\frac{C_G(a)}{\langle a\rangle})$ by $f(ba)=b\langle a\rangle$.
From Case (ii),  $f$ is a bijection such that $o(ba)\mid o(b\langle a\rangle)t$ for all
$ba\in S_a$.  By Frobenius Theorem $k\mid |Sol(1,k,\frac{C_G(a)}{\langle a\rangle})|$, so $k\mid |Sol([a],k,C_G(a))|.$
Since $ \frac{d}{k}\mid   |a^G|$ , we have 
$ d \mid |Sol([a],d,G)|$. 

Define an equivalency relation $\sim$ on $Sol([a],d,G)$ as follows:
 Let $x,y\in Sol([a],d,G)$.
We say $x\sim y$ whenever $\langle x \rangle=\langle y\rangle$.
Then $\sim $ is an equivalency relation on $Sol([a],d,G)$.
Let $c(x)$ be the equivalency class of $x\in Sol([a],d,G)$.
Since $c(x)=c(y)$ if and only if 
$y=x^i$ for some $x^i\in [x]$, we deduce that $\varphi(o(x))= |[x]|$.    Writing $Sol([a],d,G)$ as a disjoint union of its equivalence classes, it follows that
$\varphi(t)\mid |Sol([a],d,G)|$.
Consequently,  $ lcm(\varphi(t), d ) \mid |Sol([a],d,G)|.$  
 
\end{proof}
\begin{rem}
  Note that by a similar arguements as the proof of Proposition \ref{dis} (v), we may prove that 
  $d\mid |Sol([a],d,G)|$ for all $a\in G$ such that $gcd(o(a),d)=1.$
But  in general $   d \nmid |Sol(a,d,G)|$. 
For example if $G=C_{15}=\langle h\rangle$, then 
$Sol(h^3,3,G)=\{h,h^3,h^6,h^{11}\}$. Also,  in previous theorem the equality   $Sol(a_1,d,G)\cap Sol(a_2,d,G)=\varnothing$  for any $a_1\neq a_2\in Q$, is not true.
\end{rem}

\begin{lem}\label{ja}
Let $G$ be a finite group of order $n$, and  $d$ be a divisor of $exp(G)$. Let $p$ be the smallest prime divisor of $d$. Suppose that there exists a subset $Y$ of  $C_n$      with the  following properties:

(i) $|Sol(1,d/p,G)|=|Y|$.

(ii) If $y\in Y$  and $x\in C_n\setminus Y$, then 
  $o(yC_{d/p})_p\leq o(xC_{d/p})_p$.
 
 Then
  \begin{equation}\label{ppp22}
|Sol(1,d,G)\setminus Sol(1,d/p,G)|\leq |\{x\in C_n\setminus Y: d_p\mid o(x)\}|.
\end{equation}

 \end{lem}
 \begin{proof}
 
By Frobenius Theorem, $|Sol(1, n_{p'}\cdot p^{\alpha-1},G)|= n_{p'}\cdot p^{\alpha-1}e$ for some integer $e\geq 1$.
Let $Y=\{y_1,\ldots,y_r\}$. 
We may assume that $o(y_1C_{d/p})_p\leq o(y_2C_{d/p})_p\leq\ldots\leq o(y_rC_{d/p})_p$.
Let $d_p=p^{\alpha}$.
 If $p\mid o(y_{r}C_{d/p})$, then $p^{\alpha}\mid o(y_r)$.
  If there exists $z\in C_{d/p}\setminus Y$, then  
  $p^{\alpha}\nmid o(z)$, so 
$o(zC_{d/p})_p<o(y_rC_{d/p})_p$, which is a contradiction.
So
  $C_{n_{p'}\cdot p^{\alpha-1}}\subseteq Y$.
It follows that for all $y\in C_n\setminus Y$, we have $p^{\alpha}\mid o(y)$.
Hence, clearly, \ref{ppp22}, is true.

So suppose that $p\nmid o(y_{r}C_{d/p})$.
Then $y_r\in C_{n_{p'}\cdot p^{\alpha-1}}$.
Since for all $u\in C_n\setminus Y$, we have $o(y_{r}C_{d/p})_p\leq o(uC_{d/p})_p$, we conclude that
$Y\subseteq C_{n_{p'}\cdot p^{\alpha-1}}$, so
 $|Y|\leq n_{p'}\cdot p^{\alpha-1}$. 
 If $x\in Sol(1,n_{p'}\cdot p^{\alpha-1},G)$, then $x\not\in Sol(1,d,G)\setminus Sol(1, d/p,G)$.
 It follows that $$|Sol(1,d,G)\setminus Sol(1,d/p,G)|\leq n-n_{p'}\cdot p^{\alpha-1}e.$$ Consequently,
 \begin{eqnarray*}
 |Sol(1,d,G)\setminus Sol(1,d/p,G) |&\leq&n-n_{p'}\cdot p^{\alpha-1}e\\&\leq&  n_{p'}\cdot (n_p\cdot -p^{\alpha-1})\\&\leq& |\{x\in C_n\setminus Y: p^{\alpha}\mid o(x)\}|.
 \end{eqnarray*}

 \end{proof}
 
 \begin{rem}
 Let $G$ be a finite group and let $d$ be a divisor of $exp(G)$.
Let $d=p_1{^{\alpha_1}}\ldots p_k^{\alpha_k}$ be a divisor of $exp(G)$ where $p_1< \ldots<p_k$ are prime numbers.  
We have $Sol(1,d,G)\setminus Sol(1,d/p_1,G)=Sol(B_{p_1^{\alpha_1}},p_1'(d),G)$. Therefore 
$$Sol(1,d,G)=Sol(B_{p_1^{\alpha_1}},d/p_1^{\alpha_1},G)\cup Sol(1,d/p_1,G).$$
If $x\in  Sol(1,d/p_1,G)$, then $p_1^{\alpha_1}\nmid o(x)$ and for all $y\in Sol(B_{p_1^{\alpha_1}},p_1'(d),G)$, we have $p_1^{\alpha_1}\mid o(y)$. It follows that $Sol(B_{p_1^{\alpha_1}},d/p_1^{\alpha_1},G)\cap Sol(1,d/p_1,G)=\varnothing$.
By repeating this process, we deduce that   $$Sol(1,d,G)\setminus \{1\}=\bigcup_{i=1}^k\bigcup_{j=0}^{\alpha_i-1}Sol(B_{p_i^{\alpha_i-j}},\frac{d}{p_1^{\alpha_1}p_2^{\alpha_2}\ldots p_{i}^{\alpha_{i}}},G).$$

\end{rem}

\begin{lem}\label{jai}
Let $d>1$ be a divisor of $n$, and let $p$ be a prime  divisor of $d$.
Let $x,y\in C_n$ where $d_p\mid gcd(o(y),o(x))$. 

(i) There exists  a bijection 
$f$ from 
$xC_{d/p}$ to $yC_{d/p}$ such that 
for $h\in C_{d/p}$ of order $e_h$, we have     $$lcm(d_p, e_h)\mid  o(f(xh)).$$

(ii) If $T$ is a subset of a finite group $G$ with    a bijection $\beta$ from 
$T$ to $xC_{d/p}$ such that $o(t)\mid o(\beta(t)^{o(\beta(t)C_{d})})$ for all $t\in T$, then 
$o(t)\mid o(f(\beta(t))^{o(f(\beta(t))C_{d})})$ for all $t\in T$.

(iii)
If $A$ is a non-empty subset of $C_{d/p}$, the  the restriction of $f$ to $xA$ is a bijection from
 $xA$ to  $yA$.
\end{lem}
\begin{proof}
    
Let $s=o(yC_{d})$ and $t=o(xC_{d})$. We have $(xC_{d/p})^t=p=(yC_{d/p})^s$.
Hence, we may assume that $x$ any $y$ are $p$-elements.
Let $h\in C_{d/p}$ of order $e_h$. We have $o(xh)=lcm(o(x),o(h))$ and  
$o(yh)= lcm(o(y),o(h))$.
It follows that
$lcm(d_p, e_h)\mid o(yh)$.
Define $f$ from $xC_{d/p}$ onto $yC_{d/p}$ by
$f(xh)=yh$.
By above argument $lcm(d_p, e_h)\mid o(f(xh))$
for all $h\in C_{d/p}$.
Since $lcm(d_p, e_h)\mid o(f(xh)^{o(f(xh)C_{d})})$ for any $h\in C_{d/p}$, we deduce that
 $o(t)\mid o(f(\beta(t)^{o(f(\beta(t))C_{d})})$ for all $t\in T$.

Clearly,   if $A$ is a non-empty subset of $C_{d/p}$, the  the restriction of $f$ to $xA$ is a bijection from
 $xA$ to  $yA$.
\end{proof}

Now we are ready to prove the following theorems which generalize the Question \ref{is}.
   \begin{thm}  \label{main3341}
  Let $G$ be a finite group of order $n$, and let $d$   a divisor of $exp(G)$. Let $d=q_1^{\delta_1}\ldots q_v^{\delta_v}$ where $q_1<\ldots<q_v$ are prime numbers,  and let $B$ be a non-empty subset of $B_{p^{\alpha}}$ where $p$ is a prime number such that  $p^{\alpha}\neq 1$ and $p<q_1$ and $[a]\subseteq B$ for any $a\in B$.
   Then  there exist a subset $X$ of a  transversal for $C_{d}$ in $C_n$ and a bijection $f$ from $Sol(B,d,G)$ onto $XC_{d}$ such that 
  For  all $z\in Sol(B,d,G)$, we have $$o(z)\mid o((f(z))^{o(f(z)C_{d\cdot p^{\alpha}})}).$$

\end{thm}
  \begin{proof}
   
 Let    $B=[a_1]^G\cup\ldots\cup [a_t]^G$ where $[a_i^G]\cap [a_j^G]=\varnothing$ for all $1\leq i\neq j\leq t$. We proceed by induction on $t$.
 First suppose that $t=1$.  Set $a_1=a$.
 We proceed by induction on $n$.
  If $n=o(a)$, then the proof is trivial. So suppose that for all groups of order less than $n$, the theorem is true.
  Now we proceed by induction on $d$. First suppose that $d=1$. 
  By Frobenius Theorem $|Sol(1,p^{\alpha-1}\cdot n_{p'},G)|\geq p^{\alpha-1}\cdot n_{p'}$.
  Hence $|Sol(B,1,G)|\leq n- p^{\alpha-1}\cdot n_{p'}$. Since $|C_n\setminus C_{ p^{\alpha-1}\cdot n_{p'}}|\geq n- p^{\alpha-1}\cdot n_{p'}$, there exists a subset $X$ of  $C_n\setminus C_{ p^{\alpha-1}\cdot n_{p'}}$ of size $|Sol(B,1,G)|$.
  Let  $f$ be any bijection from $Sol(B,1,G)$ onto $X=XC_{1}$. Then $f$ satisfies  the conditions of the theorem. 
  So suppose that $d>1$. By induction hypothesis,   the theorem is true whenever  $t<d$ is a divisor of $exp(G)$. 
  We consider the following two Cases:

  {\bf Case 1.}
 First suppose that $a\not\in Z(G)$. Let   $\overline{d}=gcd(|C_G(a)|,d)$.
 For any $a^g\in a^G$, let $S_{a^g}(\overline{d})=Sol([a^g],\overline{d},C_G(a^g))$.
 From Proposition \ref{dis} (iv)  $S_{a^g}(\overline{d})\cap S_{a^h}(\overline{d})=\varnothing$ whenever $[a^g]\neq  [a^h]$.
 By induction hypothesis, for any $a^g\in a^G$, there exist a subset $X_{a^g}$ of a   transversal of $C_{\overline{d}}$ in $C_{n}$ and a bijection $\rho_{a^g}$ from $ S_{a^g}(\overline{d})$ onto $X_{a^g}C_{\overline{d}}$ such that for all $z\in S_{a^g}(\overline{d})$, we have $o(z)\mid o((\rho_a(z))^{o(\rho_a(z)C_{p^{\alpha}\cdot \overline{d}})})$. 
Let $[a^G]=\{u_1,\ldots,u_r\}$.
 Since 
$$|Sol([a],d,G)|\leq n-n_{p'}\cdot p^{\alpha-1}=|\{x\in C_n: p^{\alpha}\mid o(x)\}|,$$    there exist a   transversal $U$ for 
$C_{\overline{d}}$ in $C_n$ and the disjoint subsets   $X_{u_1},\ldots,X_{u_1}$ of $U$ such that
for any $1\leq i\leq r$, there exists   a bijection  $\rho_{u_i}$   from $S_{u_i}(d_a)$ onto $X_{u_i}C_{\overline{d}}$ such that $o(z)\mid o(\rho_{u_i}(z)^{o(\rho_{u_i}(z)C_{p^{\alpha}\cdot \overline{d}})})$ for all $z\in S_{u_i}(\overline{d})$. From Proposition \ref{dis} (v) $  d\mid |Sol([a],d,G)|$. So $d/\overline{d}\mid |X_{u_1}\cup \ldots\cup X_{u_r}|$. 
Let $y\in (X_{u_1}\cup\ldots \cup X_{u_1})C_d\setminus (X_{u_1}\cup\ldots\cup X_{u_1})C_{\overline{d}}$ and let $x\in (X_{u_1}\cup\ldots \cup X_{u_1})C_{\overline{d}}.$
Then $yC_{\overline{d}}\cap (X_{u_1}\cup\ldots \cup X_{u_1})C_{\overline{d}}=\varnothing.$ By Lemma \ref{jai}, we may
replace all elements of $xC_{\overline{d}}$ with all elements of $yC_{\overline{d}}$ in $(X_{u_1}\cup\ldots \cup X_{u_1})C_{\overline{d}}$. Hence we may 
assume that $X_{u_1}C_{\overline{d}}\cup \ldots\cup X_{{u_r}}C_{\overline{d}}=R C_{d}$ for some subset $R$ of $X$. 
Let $f=\bigcup_{i=1}^r \rho_{u_i}$.
Clearly, $f$  is a bijection from $Sol([a],d,G)$ onto $R C_{d}$ such that   
$o(z)\mid o((f_d(z))^{o(f_d(z)C_{p^{\alpha}\cdot d})})$ for all $z\in Sol([a],d,G)$.

    {\bf Case 2.} 
Suppose that $a\in Z(G)$. Let $C\in C_n$ of order $p^{\alpha}$.
 By induction hypothesis, there exist  a   tranversal   $\frac{R}{\langle c\rangle}$ for $\frac{C_{p^{\alpha}\cdot d_{q_1'}}}{\langle c\rangle}$ in $\frac{C_n}{\langle c\rangle}$ and a bijection $\theta$ from  
 $Sol(B_{q_1^{\delta_1}},d_{q_1'},\frac{G}{\langle a\rangle)})$ onto $\frac{RC_{p^{\alpha}\cdot d_{q_1'}}}{\langle c\rangle}$ such that
 $$o(z)\mid o((\theta(z))^{o(\theta(z)\frac{C_{p^{\alpha}\cdot d}}{\langle c\rangle})}),$$
 for all $z\in Sol(B_{q_1^{\delta_1}},d,\frac{G}{\langle a\rangle)})$.
 Hence there exists a bijection $\mu$ from 
 $Sol([a],d,G)\setminus Sol([a],m,G)$ onto 
 $RC_{q_1'(d)}$ such that
 $$o(z)\mid o((\mu(z))^{o(\mu(z)C_{p^{\alpha}\cdot d})}),$$ for all $z\in Sol([a],d,G)\setminus Sol([a],m,G)$.
 Let $m:=d/q_1=p_1^{\alpha_1}\ldots p_{k}^{\alpha_k}$ where $p_1<\ldots<p_k$ are prime numbers. By induction hypothesis, there exist a subset $Y$ of a transversal for $C_m$ in $C_n$ and a bijection $\beta$ from $Sol([a],m,G)$ onto $YC_m$ such that 
 $o(z)\mid o(f(z)^{o(f(z) C_{p^{\alpha}\cdot m})})$ for all $z\in Sol([a],m,G)$.
 By Proposition  \ref{dis} (v) 
$ m\mid |Sol([a],d,G)\setminus Sol([a],m,G)|$. 
Hence from Lemma \ref{jai}, we may assume that 
 $RC_{d}=TC_{m}$ where $T$ is a subset of a transversal for $C_{m}$ in $C_n$.
 
Since 
$$|Sol([a],d,G)|\leq n-n_{p'}\cdot p^{\alpha-1}=|\{x\in C_n: p^{\alpha}\mid o(x)\}|,$$   
from Lemma \ref{jai}, we may assume that $T\cap Y=\varnothing$.
Also, since $d\mid |Sol([a],d,G)|$, we may assume that $(Y\cup T)C_{m}=HC_{d}$  where $H$ is a subset of a transversal for $C_{d}$ in $C_n$.
Then $\mu\cup \beta$  is a bijection from $Sol([a],d,G)$ onto $HC_{d}$ with desired properties.

 Now, suppose that $t>1$.  
   Let $E=[a_1^G]\cup\ldots\cup [a_{t-1}^G]$.
 By induction hypothesis, there 
  exists a bijection $f_{1,d}$ from $Sol(E,d,G)$ onto $X_1C_{d}$ such that 
$o(z)\mid o((f_{1,d}(z))^{o(f_{1,d}(z)C_{p^{\alpha}\cdot m})})$
for all $z\in Sol(E,d,G)$  
 and  satisfy the conditions of the theorem. Suppose that $E\neq B$.
By Frobenius Theorem $| Sol(1,p^{\alpha-1}\cdot d,G)|\geq n_{p'})\cdot p^{\alpha-1}$, so 
 
  $$| Sol(B,d,G)|\leq n-n_{p'}\cdot p^{\alpha-1}=|C_{n_{p'}}(C_{n_p}\setminus C_{p^{\alpha-1}})|.$$
It follows that
\begin{eqnarray*}
 |Sol(B,d,G)\setminus Sol([a_1^G],d,G)|&\leq&|(C_{n_p}\setminus C_{n_{p'}\cdot p^{\alpha-1}}|-|X_1C_{d}|.
 \end{eqnarray*}
By induction hypothesis, there exist subset $X_2$ of a   transversal for $C_{d}$ in $C_n$ and  a         bijection $f_{2,d}$ from $Sol([a_t],d,G)$ onto $X_2C_{d}$  which satisfy the conditions of the theorem.

 Let $z\in Sol([a_t],d,G)$ such that $f_{2,d}(z)C_{d}\subset X_1C_{d}$.  Since
  \begin{eqnarray*}
| Sol(B,d,G)\setminus Sol([a_1^G],d,G)|&\leq& |(C_{n_{p}}\setminus C_{n_{p'}\cdot p^{\alpha-1}}|-|X_1C_{d}|,
 \end{eqnarray*}
there exists an  element  $u\in (C_{n_p}\setminus C_{n_{p'}\cdot p^{\alpha-1}})\setminus (X_2\cup X_1)C_{d}$.
By Lemma \ref{jai}, we may replace all elements of $f_{2,d}(z)C_{d}$ with all elements of  $uC_{d}$
  in $X_2C_{d}$. Therefore we may assume that
  $X_1\cap X_2=\varnothing$.
  Then $f_d=f_{1,d}\cup f_{2,d}$ is a bijection from $Sol(B,d,G)$ onto  $(X_{1}\cup X_2)C_{d}$ with the desired properties.

  \end{proof}
  Given two partially ordered sets $A $ and $B$, the lexicographical order on the Cartesian product $A\times B$ is defined as
$ (a,b)\leq  (a',b')$  if and only if   $a<a'$  or $a=a'$ and $b\leq b'$.
 Let $\mathbb{Z}_{+}$ be the set of all integers $n\geq 0$, and let  $\mathbb{Z}_{+}^2=\{(a,b): a,b \in \mathbb{Z}_{+}\}$.  
Let  $\succeq$ be the lexicographical order on the Cartesian product $\mathbb{Z}_{+}^2$. Hence 
  $(a_1,b_1)\succeq (a_2,b_2)$ whenever $a_1>a_2$ or $a_1=a_2$ and $b_1>b_2$.

 Let  $d$ be a   divisor of the integer number $n=\rho_1^{\theta_1}\ldots\rho_e^{\theta_e}$ where $\rho_1<\ldots<\rho_e$ are primes. 
For shorten the equations in the next theorem, we use the following notation:  For any divisor $d$ of $n$ and any $1\leq i\leq e$, the divisor  $\frac{d}{gcd(d,\rho_1^{\theta_1}\ldots\rho_i^{\theta_i})}=d_{(\rho_1\rho_2...\rho_i)'}$ of $d$ is denoted by $\lambda_i(d,n)$, also if there is no any ambiguity,  $\lambda_i(d,n)$ simply  is denoted   by $\lambda_i(d)$.
  Now, we are ready to prove the second   result.

     \begin{thm}  \label{main33}
  Let $G$ be a finite group of order $n$, and let $d$   a divisor of $exp(G)$. Let $d=q_1^{\delta_1}\ldots q_v^{\delta_v}$ where $q_1<\ldots<q_v$ are prime numbers, and let   $X_{0,0}=\{1\}$.
Then 
  for all pairs $(i,j)$ with $1\leq i\leq v$ and $0\leq j\leq \alpha_i-1$, there exist a subset $X_{q_i,\alpha_i-j}$ of a   transversal for $C_{\lambda_i(d)}$ in $C_n$ and a bijection $f$ from $Sol(1,d,G)$ onto $\bigcup_{i=1}^k\bigcup_{j=0}^{\delta_i-1}X_{\{q_i,\delta_i-j\}} C_{\lambda_i(d)}\cup X_{0,0}$ such that 
   
   (1) For  all $z\in Sol(B_{q_i^{\delta_i-j}},\lambda_i(d),G)$, we have $$o(z)\mid o(f(z)^{o(f(z)C_{\lambda_i(d) \cdot q_i^{\delta_i-j}})}).$$
  
  (2)   If      $| Sol(B_{q_i^{\delta_i-j}},\lambda_i(d),G)|\geq\varphi(q_i^{\delta_i-j})\cdot \lambda_i(n),$ then $C_{\lambda_i(n)}(C_{q_i^{\delta_i-j}}\setminus C_{q_i^{\delta_i-j-1}})\subseteq X_{q_i,\delta_i-j}C_{\lambda_i(d)}$.

(3)   If     $| Sol(B_{q_i^{\delta_i-j}},\lambda_i(d),G)|\leq\varphi(q_i^{\delta_i-j})\cdot \lambda_i(n),$ then $X_{q_i,\delta_i-j}C_{\lambda_i(d)}\subseteq  C_{\lambda_i(n)}(C_{q_i^{\delta_i-j}}\setminus C_{q_i^{\delta_i-j-1}})$.

\end{thm}
  \begin{proof}
  We proceed by double induction on $n$ and $d$.
 The cases $n=d=1$,  is trivial. If $d=1$, then for any finite group $H$ of order $t$, $f$ from $Sol(1,1,H)$ to $C_1$ is the desired bijection. So suppose that the  theorem is true for any group $H$ and divisor $r$ of $exp(H)$ with $|H|<n$ or $|H|=|G|$ and $r<d$.   Let $m:=d/q_1=p_1^{\alpha_1}\ldots p_{k}^{\alpha_k}$ where $p_1<\ldots<p_k$ are prime numbers.

Let   $M=\bigcup_{i=1}^k\bigcup_{j=0}^{\alpha_i-1} X_{\{p_i,\alpha_i-j\}} C_{\lambda_i(m)}$ and  $f=\bigcup_{i=1}^k\bigcup_{j=0}^{\alpha_i-1}f_{i,\alpha_i-j}$ where $f_{i,\alpha_i-j}$ is a bijection from 
$Sol(B_{p_i^{\alpha_i-j}},\lambda_i(m),G)$ onto $X_{p_i,\alpha_i-j}C_{\lambda_i(m)}$ such that $f_{i,\alpha_i-j}$ and the sets $X_{p_i,\alpha_i-j}C_{\lambda_i(m)}$  satisfy  the conditions (1),(2) and (3).
By induction hypothesis, we may assume that the restriction of $f$ to $M$ is a bijection from $M$ to $f(M)$.
Let $1\leq i\leq k$, and let $$x\in  X_{\{p_i,\alpha_i-j\}} C_{\lambda_i(m)}\setminus (C_{ \lambda_i(m) \cdot p_i^{\alpha_i-j}}\setminus C_{ \lambda_i(m) \cdot p_i^{\alpha_i-j-1}})$$ and $y\in C_n\setminus M$ such that $p_i^{\alpha_i-j}\mid o(y)$. Let $\beta$ be the bijection of
$xC_{\lambda_i(m)}$ onto  $yC_{\lambda_i(m)}$ obtained from Lemma \ref{jai}. 
Let $M_i=(f(Sol(B_{p_i^{\theta_i-j}},\lambda_i(m),G))\setminus xC_{\lambda_i(m)})\cup yC_{\lambda_i(m)}$, and  define a new bijection  $\eta_i$   from 
$Sol(B_{p_i^{\theta_i-j}},\lambda_i(m),G)$ onto $M_i$ by 

$$
\eta_i(z) =
\left\{
	\begin{array}{ll}
		f(z)  & \mbox{if } f(z)\in  M_i \\
		\beta (f(z))  & otherwise.
	\end{array}
\right.
$$

 From here onward whenever we say `` we replace $xC_{\lambda_i(m)}$ with $yC_{\lambda_i(m)}$ in $M$"  it means we do the above process and find $M_i$ and $\eta_i$, and identify    them by $M$ and   $f$, respectively.
First we claim that we may replace some   elements of $M$ by some elements of $C_n\setminus M$  such that all conditions of the theorem   and  the following inequality remain true:
\begin{equation*} 
|Sol(1,d,G)\setminus Sol(1,m,G)|\leq |\{x\in C_n\setminus M : q_1^{\delta_1}\mid o(x)\}|.
\end{equation*}
 
 Let $Z^2=\{(p_r,s): r=1,2,\ldots,k \  s=0,1,\ldots,\alpha_r-1\}$.
 Let $Z^2_2$ be the set of all $(p_r,s)\in Z^2$ such that
 $q_1^{\delta_1}\mid exp(X_{\{p_r,\alpha_r-s\}}C_{\lambda_r(m)})$. If $Z^2_2$ is an empty set, then for all $y\in M$ and   $x\in C_n\setminus M$, we have $o(yC_{d/q_1})_{q_1}\leq o(xC_{d/q_1})_{q_1}$, and so by Lemma \ref{ja}, 
\begin{equation*} 
|Sol(1,d,G)\setminus Sol(1,m,G)|\leq |\{x\in C_n\setminus M : q_1^{\delta_1}\mid o(x)\}|.
\end{equation*}
 So suppose that  $Z^2_+$ is not an empty set. 
Let $(p_i,\alpha_i-j)$ be the maximal element of $Z^2_+$ with respect to the  lexicographic order $\succeq$.
  Hence 
 $q_1^{\delta_1}\nmid exp(X_{\{p_r,\alpha_r-s\}}C_{\lambda_r(m)})$
 for all $r>i$ or $r=i$ and $s>j$.
  The set of all elements of $C_n$ which $p_i^{\alpha_i-j}\mid o(x)$ but $q_1^{\delta_1}\nmid o(x)$ is equal to the set $$C_{n_{(p_i\cdot q_1)'} q_1^{\delta_1-1}}(C_{n_{p_i}}\setminus C_{p_i^{\alpha_i-j-1}})=C_{n_{q_1'}q_1^{\delta_1-1}}\setminus C_{n_{(p_i\cdot q_1)'}}C_{ p_i^{\alpha_i-j-1}}.$$
For any $x \in X_{\{p_i,\alpha_i-j\}}C_{\lambda_i(m)}$ with $q_1^{\delta_1}\mid o(x)$, if there exists $y\in (C_{n_{q_1'}q_1^{\delta_1-1}}\setminus C_{n_{(p_iq_1)'}}C_{ p_i^{\alpha_i-j-1}})\setminus M$, then we may replace $xC_{\lambda_i(m)}$ with $yC_{\lambda_i(m)}$ in $X_{\{p_i,\alpha_i-j\}}C_{\lambda_i(m)}$.
After repeat this process  at most  $w\leq |X_{\{p_i,\alpha_i-j\}}C_{\lambda_i(m)}|$ times, 
we have one of the following two cases:

{\bf Case 1.}
If $q_1^{\delta_1}\mid exp(X_{\{p_i,\alpha_i-j\}}C_{\lambda_i(m)})$, then $C_{n_{q_1'}q_1^{\delta_1-1}}\setminus C_{n_{(p_iq_1)'})}C_{ p_i^{\alpha_i-j-1}}\subseteq M$.
We have $$(C_n\setminus M)\setminus (C_{n_{(p_iq_1)'}}C_{ p_i^{\alpha_i-j-1}}\setminus M)=\{x\in C_n\setminus M: q_1^{\delta_1}\mid o(x)\}.$$
Since $(C_{n_{(p_iq_1)'}}C_{ p_i^{\alpha_i-j-1}}\setminus M)\subseteq C_n\setminus M$,
and $M\subseteq C_n$, we deduce that 
$$(n- |M|)- |C_{n_{(p_iq_1)'}}C_{ p_i^{\alpha_i-j-1}}\setminus M|=|\{x\in C_n\setminus M: q_1^{\delta_1}\mid o(x)\}|.$$

Let $n_{q_1'}p_i^{\alpha-j-1}=\varrho_1^{\omega_1}\ldots  \varrho_b^{\omega_b}$ where $\varrho_1<\ldots  <\varrho_b$ are prime numbers. 
 Let $\Phi$ be the set of all $(r, \omega_r-s)$ where
  $1\leq r\leq b$ and $0\leq s\leq \omega_r-1$, and let $\varsigma$ be the set of all $(r, \omega_r-s)\in \Phi$ such that $\varrho_r^{\omega_r-s}\mid m$.
  Also, let $\Upsilon$ be the set of all $(r, \omega_r-s)\in \varsigma$ such that $|Sol(B_{\varrho_r^{\omega_r-s}},\lambda_r(m),G)|< \lambda_r(n)\varphi(\varrho_r^{\omega_r-s}).$

 Let $ (r, \omega_r-s)\in\varsigma\setminus \Psi$.
Then $ |Sol(B_{\varrho_r^{\omega_r-s}},\lambda_r(m),G)|\geq \lambda_r(n)\varphi(\varrho_r^{\omega_r-s}),$   
so by our assumption  $C_{\lambda_r(n)}(C_{\varrho_r^{\omega_r-s}}\setminus C_{\varrho_r^{\omega_r-s-1}})\subseteq X_{\varrho_r,\omega_r-s}C_{\lambda_r(m)},$  hence
 \begin{eqnarray*}
|Sol(B_{\varrho_r^{\omega_r-s}},\lambda_r(n),G)\setminus  Sol(B_{\varrho_r^{\omega_r-s}},\lambda_r(m),G)|&\geq&|C_{\lambda_r(n)}(C_{\varrho_r^{\omega_r-s}}\setminus C_{\varrho_r^{\omega_r-s-1}})\setminus f(Sol(B_{\varrho_r^{\omega_r-s}},\lambda_r(m),G))|\\&=&|C_{\lambda_r(n)}(C_{\varrho_r^{\omega_r-s}}\setminus C_{\varrho_r^{\omega_r-s-1}})\setminus X_{\varrho_r,\omega_r-s}C_{\lambda_r(m)}|\\&=&0.
\end{eqnarray*}

Let $(r, \omega_r-s)\in \Upsilon$. By our assumption  $ X_{\varrho_r,\omega_r-s}C_{\lambda_r(m)}\subseteq C_{\lambda_r(n)}(C_{\varrho_r^{\omega_r-s}}\setminus C_{\varrho_r^{\omega_r-s-1}})$.
From Proposition \ref{dis} (v) $|Sol(B_{\varrho_r^{\omega_r-s}},\lambda_r(n),G)|\geq  \lambda_r(n)\varphi(\varrho_r^{\omega_r-s}),$
so
\begin{eqnarray*}
&&|Sol(B_{\varrho_r^{\omega_r-s}},\lambda_r(n),G)\setminus  Sol(B_{\varrho_r^{\omega_r-s}},\lambda_r(m),G)|\\&\geq&|C_{\lambda_r(n)}(C_{\varrho_r^{\omega_r-s}}\setminus C_{\varrho_r^{\omega_r-s-1}})\setminus f(Sol(B_{\varrho_r^{\omega_r-s}},\lambda_r(m),G))|\\&=&
|C_{\lambda_r(n)}(C_{\varrho_r^{\omega_r-s}}\setminus C_{\varrho_r^{\omega_r-s-1}})\setminus X_{\varrho_r,\omega_r-s}C_{\lambda_r(m)}|.
\end{eqnarray*}

If $(r, \omega_r-s)\in \phi\setminus \varsigma$, then  from Proposition \ref{dis} (v) $|Sol(B_{\varrho_r^{\omega_r-s}},\lambda_r(n),G)|\geq  \lambda_r(n)\varphi(\varrho_r^{\omega_r-s}),$
so,  
\begin{eqnarray*}
|Sol(B_{\varrho_r^{\omega_r-s}},\lambda_r(n),G)\setminus M|&=&|Sol(B_{\varrho_r^{\omega_r-s}},\lambda_r(n),G)|\\&\geq& \varphi(\varrho_r^{\omega_r-s})\cdot \lambda_r(n)\\&=& |C_{\lambda_r(n)}(C_{\varrho_r^{\omega_r-s}}\setminus C_{\varrho_r^{\omega_r-s-1}})|.
\end{eqnarray*}
Therefore 
\begin{eqnarray*}
&&|C_{p_i^{\alpha-j-1}q_1'(p_i'(n))}\setminus M|\\&=&
 |\bigcup_{(r,\omega_r-s)\in\Phi}C_{\lambda_r(n)}(C_{\varrho_r^{\omega_r-s}}\setminus C_{\varrho_r^{\omega_r-s-1}})\setminus M|\\&=& |\bigcup_{(r,\omega_r-s)\in\Upsilon}C_{\lambda_r(n)}(C_{\varrho_r^{\omega_r-s}}\setminus C_{\varrho_r^{\omega_r-s-1}})\setminus M|+|\bigcup_{(r,\omega_r-s)\in\Phi\setminus \varsigma}C_{\lambda_r(n)}(C_{\varrho_r^{\omega_r-s}}\setminus C_{\varrho_r^{\omega_r-s-1}})\setminus M|\\&\leq& \sum_{(r,\omega_r-s)\in\Upsilon}|C_{\lambda_r(n)}( C_{\varrho_r^{\omega_r-s}}\setminus C_{\varrho_r^{\omega_r-s-1}})\setminus f(Sol(B_{\varrho_r^{\omega_r-s}},\lambda_r(m),G))|\\&+& \sum_{(r,\omega_r-s)\in\Phi\setminus \varsigma}|C_{\lambda_r(n)}(C_{\varrho_r^{\omega_r-s}}\setminus C_{\varrho_r^{\omega_r-s-1}})\setminus M|\\&\leq&  
 \sum_{(r,\omega_r-s)\in\Phi}|Sol(B_{\varrho_r^{\omega_r-s}},\lambda_r(n),G)\setminus  Sol(B_{\varrho_r^{\omega_r-s}},\lambda_r(m),G)|.
\end{eqnarray*}
Let $(r,\gamma_r-s)\in \Phi$, and let $x\in Sol(B_{\varrho_r^{\omega_r-s}},\lambda_r(n),G)\setminus  Sol(B_{\varrho_r^{\omega_r-s}},\lambda_r(m),G)$.
Then $o(x)\nmid d$, and so $x\not\in Sol(1,d,G)$.
  It follows that
\begin{eqnarray*}
|Sol(1,d,G)\setminus Sol(1,m,G)|&\leq&
 |Sol(1,n,G)\setminus Sol(1,m,G)|\\&-&
 |\sum_{(r,\gamma_r-s)\in \Phi}|Sol(B_{\varrho_r^{\omega_r-s}},\lambda_r(n),G)\setminus  Sol(B_{\varrho_r^{\omega_r-s}},\lambda_r(m),G)|\\&\leq& n-|M|-|C_{p_i^{\alpha-j-1}n_{(p_iq_1)'}}\setminus M|\\&=&
 |\{x\in C_n\setminus M: q_1^{\delta_1}\mid o(x)\}|.
 \end{eqnarray*}

{\bf Case 2.} $q_1^{\delta_1}\nmid exp(X_{\{p_i,\alpha_i-j\}}C_{\lambda_i(m)})$. 
If there exists $(p_r,\alpha_r-s)\prec (p_i,\alpha_i-j)$ such that 
$q_1^{\delta_1}\mid exp(X_{\{p_r,\alpha_r-s\}}C_{\lambda_r(m)})$, then we repeat the above process for $(p_r,\alpha_r-s)$.
In the end, we have    
\begin{equation*} 
|Sol(1,d,G)\setminus Sol(1,m,G)|\leq |\{x\in C_n\setminus M : q_1^{\delta_1}\mid o(x)\}|,
\end{equation*}
as claimed.

Since $d_{q_1'}<d$ by Theorem \ref{main3341}, there exist a subset $X_{q_1,\delta_1}$ of a transversal for $C_{d_{q_1'}}$ in $C_n$ and a bijection $f_{1,\delta_1}$   from 
$Sol(B_{q_1^{\delta_1}},d_{q_1'},G)$ onto 
$X_{q_1,\delta_1}C_{d_{q_1'}}$ such that
for any $z\in Sol(B_{q_1^{\delta_1}},d_{q_1'},G)$, we have 
$o(z)\mid o(f_{1,\delta_1}(z)^{o(f_{1,\delta_1}(z)C_{d})})$.
Since \begin{equation*} 
|Sol(1,d,G)\setminus Sol(1,m,G)|\leq |\{x\in C_n\setminus M : q_1^{\delta_1}\mid o(x)\}|.
\end{equation*}
  by  Lemma \ref{jai}, we may assume that $M\cap X_{q_1,\delta_1}C_{d_{q_1'}}=\varnothing.$
Hence $\tau=f\cup f_{1,\delta_1}$  is our bijection with desired properties.

Let $x\in X_{q_1,\delta_1}C_{d_{q_1'}}$ and $y\in C_n\setminus X_{q_1,\delta_1}C_{d_{q_1'}}\cup M$ such that $x\not\in C_{\lambda_1(n)}(C_{q_1^{\delta_1 }}\setminus C_{q_1^{\delta_1-1}})$  and $y\in C_{\lambda_1(n)}(C_{q_1^{\delta_1 }}\setminus C_{q_1^{\delta_1-1}})$.
From  Lemma \ref{jai}, we may replace all elements of $xC_{d_{q_1'}}$ with  all elements of $yC_{d_{q_1'}}$. Therefore, the proofs of cases (2) and (3) are clear.

  \end{proof}

  The following corollary gives affirmative answer to Question \ref{is}.

 \begin{cor} 
  Let $G$ be a finite group of order $n$.
  Then there exists a bijection $f$ from $G$ onto $C_n$ such that $o(x)$ divides $o(f(x))$ for all $x\in G$.
\end{cor} 
\begin{proof}
Let $d:=exp(G)=q_1^{\delta_1}\ldots q_v^{\delta_v}$ where $q_1<\ldots<q_v$ are prime numbers, and let $X_{0,0}=\{1\}$. From Theorem \ref{main33}, 
  for all pairs $(i,j)$ with $1\leq i\leq v$ and $0\leq j\leq \alpha_i-1$, there exist a subset $X_{q_i,\alpha_i-j}$ of a   transversal for $C_{\lambda_i(d)}$ in $C_n$ and a bijection $f$ from $Sol(1,d,G)=G$ onto $\bigcup_{i=1}^k\bigcup_{j=0}^{\delta_i-1}X_{\{q_i,\delta_i-j\}} C_{\lambda_i(d)}\cup X_{0,0}=C_n$ such that 
   for  all $z\in Sol(B_{q_i^{\delta_i-j}},\lambda_i(d),G)$, we have $$o(z)\mid o(f(z)^{o(f(z)C_{\lambda_i(d) \cdot q_i^{\delta_i-j}})}).$$
 
 Let $z\in G\setminus \{1\}$. Then  $z\in Sol(B_{q_i^{\delta_i-j}},\lambda_i(d),G)$ for some $1\leq i\leq v$ and some $0\leq j\leq \delta_i-1$. Hence $$o(o(f(z)^{o(f(z)C_{\lambda_i(d) \cdot q_i^{\delta_i-j}})}))=\frac{o(f(z))}{gcd(o(f(z)),o(f(z)C_{\lambda_i(d) \cdot q_i^{\delta_i-j}}))}\mid o(f(z)).$$
 So $f$ is a bijection from $G$ onto $C_n$ such that 
 $o(z) \mid o(f(z))$ for all $z\in G$.
\end{proof}

\section{Some applications}

 \begin{rem}
     Let $B(C_n)$ be the set of all subgroups of the cyclic group $C_n$ of order $n$, and let 
  $\tau(C_n)$ be the set of all subsets of $B(C_n)$. Then $\tau(C_n)$ is a topology on $C_n$ where $B(C_n)$ is  a countable  base for $\tau(C_n)$. 
Let $G$ be a group of order $n$, and $f$ be the bijection in Question \ref{is}. Then $f^{-1}(\tau(C_n))$ is a topology on $G$ which is induced by $\tau(C_n)$. 
Hence affirmative answer to Question \ref{is} gives us a non-trivial discrete topology $\tau(G)$ in any finite group $G$.
 \end{rem}

If $G$ is a finite group, let $f(G)$ be the average order of an element of $G$, i.e.,
 $f(G) = \frac{1}{|G|}\sum_{g\in G}o(g)$.
 It is easy to see that $f(G)\leq h(G)$, where $h(G)$ is number of conjugacy classes of $G$.
 It is known by Lindsey's Theorem \cite{Lin} that  $f(G)\leq f(C_n)$ for any finite group $G$ of order $n$.
 The  affirmative answer to Question \ref{is}, leads us to the new proof of this result.

Let $f$ be a  function from $\mathbb{R}$   to $\mathbb{R}$ and $P_k(G)$ be the set of all subsets of $G$ of size $k$. Recall that $\psi_{f,k}(G) =\sum\limits_{S\in P_k(G)}\mu(S)$ where $G$ is a finite group and $\mu(S)=\Pi_{x\in S}f(o(x))$.  Also, we may define  $\psi^{f,k}(G) =\sum\limits_{x\in G} (f(o(x)))^k$.
 Similar to the case where $ l = 1$ can simply be proved: If  $f$   is an increasing  function and $ G = A \times B $ is a finite group, then  $ \psi^{f,l}(G) \leq \psi^{f,l}(A)\cdot \psi^{f,l}(B) $ and  $ \psi^{f,l}(G) = \psi^{f,l}(A) \cdot \psi^{f,l}(B) $ if and only  $ gcd (| A |, | B |) = 1$.

Let $f$ be a  function from $\mathbb{R}$   to $\mathbb{R}$ and $P_k(G)$ be the set of all subsets of $G$ of size $k$. Define $\psi_{f,k}(G) =\sum\limits_{S\in P_k(G)}\mu(S)$ where $G$ is a finite group and $\mu(S)=\Pi_{x\in S}f(o(x))$.     Positive answer to   Question  \ref{is}, leads to the following result.

\begin{thm}
Let $G$ be a finite non-cyclic group of order $n$ and $k\in \mathbb{N}$.

(a) If $f$   is an increasing  function, then   $\psi_{f,k}(G)<\psi_{f,k}(C_n)$.

 (b) If $f$   is a  decreasing  function, then   $\psi_{f,k}(G)>\psi_{f,k}(C_n)$.

 \end{thm}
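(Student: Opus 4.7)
The plan is to take as input the bijection postulated by Question \ref{is}: since the theorem is explicitly framed as a consequence of a positive answer to that question, we may assume a bijection $g:G\to C_n$ satisfying $o(x)\mid o(g(x))$ for every $x\in G$. The map $g$ lifts to a bijection $\tilde g:P_k(G)\to P_k(C_n)$, $S\mapsto g(S)$, so that
\[
\psi_{f,k}(C_n)\;=\;\sum_{S\in P_k(G)}\prod_{x\in S} f\bigl(o(g(x))\bigr),
\]
which pairs each summand of $\psi_{f,k}(G)$ with a designated summand of $\psi_{f,k}(C_n)$.

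Next I would compare the two paired terms $\mu(S)=\prod_{x\in S}f(o(x))$ and $\mu(g(S))=\prod_{x\in S}f(o(g(x)))$ factor by factor. Because $o(x)\mid o(g(x))$ forces $o(x)\le o(g(x))$ as positive integers, monotonicity of $f$ yields the pointwise inequalities $f(o(x))\le f(o(g(x)))$ in case (a) and $f(o(x))\ge f(o(g(x)))$ in case (b). Multiplying over $x\in S$, under the natural positivity of $f$ on divisors of $n$ (the same framework in which \cite{jaf} and \cite{mohs} operate), gives $\mu(S)\le\mu(g(S))$ in case (a) and $\mu(S)\ge\mu(g(S))$ in case (b). Summation over $S\in P_k(G)$ then produces the weak forms of both inequalities.

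Strictness is where non-cyclicity is essential. Since $|G|=n$ but $G$ has no element of order $n$, while $C_n$ has $\varphi(n)\ge 1$ generators, the bijection $g$ must send at least one element $x_0\in G$ to a generator of $C_n$, giving $o(x_0)<n=o(g(x_0))$. Consequently $f(o(x_0))\neq f(o(g(x_0)))$, strictly in the appropriate direction. For any $k$-subset $S$ containing $x_0$, positivity of the remaining factors turns this single strict factor into $\mu(S)<\mu(g(S))$ (resp.\ $>$). Since such $S$ exist whenever $k\le n$ (the case $k>n$ being trivial as both sides vanish), summing over $P_k(G)$ gives the strict global inequality.

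The main technical obstacle is ensuring that a single strict factor at $x_0$ actually propagates through the $k$-fold product to a strict inequality for every $S\ni x_0$; this is exactly the place where positivity of $f$ on the set of element orders appearing in $G$ is needed, and it should be flagged as an implicit hypothesis on $f$ (otherwise an increasing function that crosses zero could produce cancellations that ruin the product comparison).
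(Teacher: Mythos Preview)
The paper does not supply an explicit proof of this statement: both in the introduction and at the end of Section~4 it is simply asserted that a positive answer to Question~\ref{is} ``leads to'' the result, with no further argument. Your proposal is precisely the natural unfolding of that claim---use the order-divisibility bijection $g:G\to C_n$, lift it to $k$-subsets, compare the products $\mu(S)$ and $\mu(g(S))$ factor by factor via monotonicity, and extract strictness from the existence of some $x_0$ with $o(x_0)<o(g(x_0))=n$---so your approach is exactly what the paper has in mind.

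Your closing paragraph is well taken and in fact goes beyond what the paper says: the theorem as stated (an arbitrary increasing/decreasing $f:\mathbb{R}\to\mathbb{R}$) is not literally correct without an additional positivity hypothesis on $f$ restricted to the divisors of $n$, since otherwise a product of $k$ factors need not preserve a pointwise inequality. You are right to flag this. One further small point in the same spirit: to obtain \emph{strict} inequality at $x_0$ you also need $f$ to be \emph{strictly} monotone (a constant function is both increasing and decreasing in the weak sense, yet gives $\psi_{f,k}(G)=\psi_{f,k}(C_n)$). Both caveats are implicit in the intended application---the cited papers \cite{jaf,mohs} work with $f(t)=t$, which is positive and strictly increasing on $\mathbb{N}$---but the paper's blanket hypothesis on $f$ is indeed too loose as written.
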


 Let $H$ be a finite group, and let $a \geq  2$ be an integer.  Define the undirected
multigraph $G(a, H)$ with vertex set $H$ and $x\sim y$ if $x^a = y$, with an additional edge
if $y^a = x$.
 Let $N(a, H)$ denote the number of connected components in $G(a, H).$ This
graph has connection with algorithmic number theory and cryptography, for example see \cite{4}, \cite{6} and \cite{12}. Let $N(a, H)$ denote the number of connected components
in $G(a, H)$. Positive answer to Question 1.1, confirms the Conjecture 1.3, posed by M.
Larson in \cite{matt}.

\begin{conj}
Let $G$ be a group of order $n.$ Then
$$N(a, G)\geq N(a,C_n).$$

 \end{conj}
\begin{proof} From Theorem \ref{main33}, there exists a bijection $f$ from $G$ onto $C_n$ such that
$o(x)\mid o(f(x))$ for all $x\in G$.
By  the successive comments after Lemma 1 of \cite{matt}, for all finite groups $H$,
\begin{equation}
 N(a, H)=
  \sum\limits_{\substack{g\in H\\
                  gcd(o(g),a)=1}}
       \frac{1}{ord_{o(g)}(a)}
\end{equation}
where  $ord_n(a)$ denote the multiplicative order of
$a$ in $\frac{Z}{nZ}$. Let $g\in G$ such that $gcd(o(g),a)=gcd(o(f(g)),a)=1$, and let $ord_{o(g)}(a)=d$ and $ord_{o(f(g))}(a)=m$.
Since $o(f(g))\mid a^m-1$ and $o(g)\mid o(f(g))$, we have $ o(g)\mid a^m-1$.
Hence $d\mid m$. It follows that $$\frac{1}{ord_{o(g)}(a)}=\frac{1}{d}\geq \frac{1}{m}= \frac{1}{ord_{o(f(g))}(a)}$$
Consequently, from (3.1)
\begin{equation*}
  \sum\limits_{\substack{g\in G\\
                  gcd(o(g),a)=1}}
       \frac{1}{ord_{o(g)}(a)} \geq
  \sum\limits_{\substack{f(g)\in C_n\\
                  gcd(o(f(g)),a)=1}}
       \frac{1}{ord_{o(f(g))}(a)}.
\end{equation*}

\end{proof}

\bibliographystyle{amsplain}

\end{document}